\newcommand{\reflemma}[1]{Lemma~\ref{#1}}
\DeclarePairedDelimiter\norm\lVert\rVert
\DeclareMathOperator*{\argmin}{arg\,min}
\newcommand{\fancybreak}[1]{%
  \penalty -100
  \noindent\parbox{\linewidth}{\centering #1}%%\null
  \par
  \@afterindentfalse
  \@afterheading}
\begin{document}
\title{Simulation of Conditioned Diffusions on the Flat Torus\thanks{MHJ, AM, and SS are supported by the CSGB Centre for Stochastic Geometry and Advanced Bioimaging funded by a grant from the Villum Foundation.}}
%
%\titlerunning{Abbreviated paper title}
% If the paper title is too long for the running head, you can set
% an abbreviated paper title here
%
\author{Mathias Højgaard Jensen \and
Anton Mallasto \and
Stefan Sommer}
\authorrunning{Højgaard Jensen et al.}
% First names are abbreviated in the running head.
% If there are more than two authors, 'et al.' is used.
%
\institute{Department of Computer Science, Copenhagen University, Denmark \\
\email{\{matje,mallasto,sommer\}@di.ku.dk}}
\maketitle              % typeset the header of the contribution
\begin{abstract}
Diffusion processes are fundamental in modelling stochastic dynamics in natural sciences. Recently, simulating such processes on complicated geometries has found applications for example in biology, where toroidal data arises naturally when studying the backbone of protein sequences, creating a demand for efficient sampling methods.
In this paper, we propose a method for simulating diffusions on the flat torus, conditioned on hitting a terminal point after a fixed time, by considering a diffusion process in $\mathbb{R}^2$ which we project onto the torus. We contribute a convergence result for this diffusion process, translating into convergence of the projected process to the terminal point on the torus. We also show that under a suitable change of measure, the Euclidean diffusion is locally a Brownian motion.

\keywords{Simulation \and Conditioned diffusion \and Manifold diffusion \and Flat Torus.}
\end{abstract}
\section{Introduction}

Stochastic differential equations are ubiquitous in models describing evolution of dynamical systems with, e.g. in modelling the evolution of DNA or protein structure, in pricing financial derivatives, or for modelling changes in landmark configurations which are essential in shape analysis and computational anatomy. In settings where the beginning and end values are known on some fixed time interval, the use of Brownian bridges becomes natural to evaluate the uncertainty on the intermediate time interval. 

When the data elements are elements of non-linear spaces, here differentiable manifolds, methodology for simulating bridge processes is lacking. In particular, in cases where the transition probability densities are intractable, it is of interest to use simulation schemes that can numerically approximate the true densities. In this paper we propose a method for simulating diffusion bridges on the flat torus, $\mathbb{T}^2=\mathbb{R}^2/\mathbb{Z}^2$, i.e. we propose a process that can easily be simulated and satisfies that the distribution of the true bridge of interest is absolutely continuous with respect to the distribution of this proposal process. This specific case will serve as an example of the more general setting of simulating diffusion bridge processes on Riemannian manifolds. Because of the non-trivial topology of the torus $\mathbb{T}^2$, the conditioned process will be equivalent to a process in $\mathbb{R}^2$ that is conditioned on ending up in a set of points. Therefore, we will address the question of conditioning a process on infinitely many points. Secondly, we will handle the case when the process crosses the cut locus of the target point, i.e. the set of points with no unique distance minimizing geodesic.

It is a basic consequence of Doob's h-transform that the distribution of a conditioned diffusion process is the same as another diffusion process with the drift depending on the transition density. However, as mentioned in \cite{delyon_simulation_2006}, using this transform directly is undesirable for simulation purposes as the transition density is often intractable. Instead, the authors introduce a diffusion process which can easily be simulated and with the property that the distribution of the true conditioned diffusion is absolutely continuous wrt. the diffusion used for simulation. We here use this approach that in \cite{delyon_simulation_2006} covers the Euclidean case as the starting point for developing a simulation scheme on the torus.

\fancybreak{}

Recent papers have considered diffusion processes on the torus, for example, Langevin diffusions on the torus were studied in \cite{garcia-portugues_langevin_2017} and \cite{golden_generative_2017}, in the latter to describe protein evolution. In this paper, we introduce a diffusion process in $\mathbb{R}^2$ which can easily be simulated and projects onto a bridge process on the torus. More generally, Brownian bridges on manifolds have been studied for example in the context of landmark manifolds \cite{sommer2017bridge} and used for approximating the transition density of the Brownian motion. The present paper uses bridges on the flat torus to exemplify how some of the challenges of bridge simulation on Riemannian manifolds can be addressed, here in particular non-trivial topology of the manifold.

\fancybreak{}

We begin in Section \ref{sec: theoretical setup} with a short introduction to Brownian bridge processes in the standard Eucliden case and how it relates to the definition of a Brownian bridge process on the flat torus. At the end we introduce the stochastic differential equation (SDE) which will be used for simulating the bridge process. In Section \ref{sec: existence of solution} we argue that a strong solution of our proposed SDE exist. We show results about convergence and absolute continuity in Section \ref{sec: absolute continuity}. Numerical examples are presented in Section \ref{sec: numerical experiments}.

\section{Theoretical Setup}\label{sec: theoretical setup}

This section will briefly review some Brownian bridges theory and discuss the torus case. A more general theory of diffusion bridges can be found in \cite{delyon_simulation_2006}, constituting the main reference for this work. At the end, we introduce our proposal process.

\fancybreak{}

Consider a Brownian motion $W = (W_t)_{t \geq 0}$ in $\mathbb{R}^n$. By conditioning, it can be shown that $W$ will end up at a given point at a given time. For example, the process given by $B_t = W_t - \frac{t}{T} W_T$ defines a Brownian bridge conditioned to return to $0$ at time $T$. It can be shown that the diffusion process given by
\begin{equation}\label{standard brownian bridge}
  dX_t = \frac{b-X_t}{T-t}dt + dW_t; \qquad 0 \leq t < T \quad
  \text{and} \quad X_0 = a,
\end{equation}
for given $a,b \in \mathbb{R}^d$ and $W$ a $d$-dimensional standard Brownian motion, is a $d$-dimensional Brownian bridge from $a$ to $b$ on $[0,T]$ (see e.g. \cite[sec. 5.6]{karatzas1991brownian}). More generally, diffusion bridges can be defined through Doob's $h$-transform, that is, the distribution of a diffusion
\begin{align*}
  dX_t = b(t,X_t)dt + \sigma(t,X_t)dW_t, \qquad X_0=a,
\end{align*}
conditioned on $X_T=b$ is the same as that of 
\begin{align*}
 dY_t &= \tilde{b}(t,Y_t)dt + \sigma(t,Y_t)dW_t,\\
 \tilde{b}(t,x) &= b(t,x) + \sigma(t,x)\sigma^T(t,x)\nabla_x     	\log(p(t,x;T,b)),
\end{align*}
where $p(t,x;T,b)$ denotes the transition density of the process $X$. In the usual setting where $p$ is the transition density of a Brownian motion it has the form
\begin{align*}
 p(s,x;t,y) = \frac{1}{\sqrt{2\pi (t-s)}}\exp\biggl(-\frac{||
 x-y||^2}{2(t-s)}\biggr), \qquad s<t,
\end{align*}
which yields \eqref{standard brownian bridge}.

We propose a method similar to the Euclidean scheme \cite{delyon_simulation_2006} for simulating Brownian bridges on the flat torus, which is of the form
\begin{equation}\label{diffusion on torus}
 dX_t = b(t,X_t)dt + \sigma dW_t; \quad 0 \leq t < T \quad  \text{ and } \quad X_0 = a \quad \text{ a.s.},
\end{equation}
where $\sigma > 0$, $a \in \mathbb{T}^2$ is given, and $W$ is a two-dimensional standard Brownian motion. The exact form of $b(t,x)$ will become apparent below. It is important here to note that in the particular case of the flat torus the transition density for the Brownian motion is known and therefore it is possible to simulate from the distribution of the true Brownian bridge on $\mathbb{T}^2$, however, it requires the calculation of the distance to infinitely many points which the proposed model does not. In Figure \ref{fig: true vs proposed} is shown paths of the proposed model and the corresponding paths of the true bridge process. 

\fancybreak{}

Let $\pi \colon \mathbb{R}^2 \rightarrow \mathbb{T}^2=\mathbb{R}^2/\mathbb{Z}^2$ denote the canonical projection onto the torus. The standard two-dimensional Brownian motion $W=(W^1, W^2)$, for two independent one-dimensional Brownian motions $W^1$ and $W^2$, is mapped to a Brownian motion $B=(B_t)_{t \geq 0}$ on the flat torus $\mathbb{T}^2$ by the projection map $\pi$. Indeed, we can identify the torus $\mathbb{T}^2$ with the unit cube $Q = \{x \in \mathbb{R}^2 : - \frac{1}{2} \leq x_k < \frac{1}{2}, k=1,2\}$. Then for $g \in C^{\infty}(\mathbb{T}^2)$ the Laplace-Beltrami operator, $\Delta_{\mathbb{T}^2}$, on $\mathbb{T}^2$ corresponds to the restriction to $Q$ of the usual Euclidean Laplacian, $\Delta_{\mathbb{R}^2} \tilde{g}$, where $\tilde{g}$ denotes the periodic extension of $g$, i.e. $\tilde{g} = g \circ \pi$ (see \cite[Sec. 3.5]{sogge_hangzhou_2014}). Since $W$ is a Brownian motion in $\mathbb{R}^2$ if and only if it satisfies the diffusion equation
\begin{align*}
 h(W_t) \overset{m}{=} h(W_0) - \frac{1}{2} \int_0^t 
 \Delta_{\mathbb{R}^2} h(W_s) ds,
\end{align*}
for all smooth functions $h$, where $X\overset{m}{=}Y$ means that the difference $X-Y$ is a local martingale (see e.g. \cite[Sec. 1.5]{emery1989stochastic}), it follows that, for $h = \tilde{g}$,
\begin{align*}
 \tilde{g}(W_t) & \overset{m}{=} \tilde{g}(W_0) - \frac{1}{2} 
 \int_0^t \Delta_{\mathbb{R}^2} \tilde{g}(W_s)ds = g (B_0) -  
 \frac{1}{2}\int_0^t \Delta_{\mathbb{T}^2} g(B_s)ds  
 \overset{m}{=} g(B_t).
\end{align*}
As this holds for all smooth functions $g$ on $\mathbb{T}^2$, we get that $B$ is a Brownian motion on $\mathbb{T}^2$ in agreement with the definition of a manifold-valued Brownian motion given in \cite[Sec. 3.2]{hsu2002stochastic}. 

By conditioning $B$ on $\mathbb{T}^2$ to hit a given point $a \in \mathbb{T}^2$, at some fixed time $0 \leq T < \infty$, it is seen that
\begin{align*}
 \{\omega \in \Omega : B_T(\omega) = a\} = \{\omega \in 
 \Omega : W_T(\omega) \in \pi^{-1}(a) \},
\end{align*}
and so simulating a Brownian bridge on the flat torus $\mathbb{T}^2$ is equivalent to simulating a two-dimensional standard Brownian motion conditioned to end up in the set $\pi^{-1}(a)$ at time $T$. The diffusion given by \eqref{standard brownian bridge} will not suffice as it is constructed to hit exactly one point. It will, however, provide one subset of sample paths of the Brownian bridge on $\mathbb{T}^2$, corresponding to subset of paths that will "unwrap" the same number of times that it "wraps" around the cut locus. This is illustrated in Figure \ref{fig:example}. To give a precise meaning to this statement we consider the $h$-transform
\begin{equation*}
 h(t,z) = \sum_{y \in \pi^{-1}(a)} \frac{p(t,z;T,y)}
 {p(0,z_0;T,y)},
\end{equation*}
with $p$ denoting the transition density of the two-dimensional Brownian motion, which by Doob's $h$-transform implies that the distribution of $W$ conditioned on $W_T \in \pi^{-1}(a)$ is the same as the distribution of the diffusion 
\begin{equation}\label{true bridge process}
\begin{split}
 dZ_t &= \sigma^2 \nabla_z \log\bigg(\sum_{y \in \pi^{-1}(a)} 
 p(t,z;T,y)\bigg)\bigg|_{x=Z_t}dt + \sigma dW_t\\
 &=  \sum_{y \in \pi^{-1}(a)} g_y(t,Z_t)\frac{y-Z_t}{T-t} dt 
 + \sigma dW_t, \qquad Z_0 = z_0,
\end{split}
\end{equation}
where 
\begin{equation*}
  g_y(t,x) =
  \frac{\exp\big(-\frac{\norm{y-z}^2}{2\sigma^2(T-t)}\big)}{\sum_{y
      \in
      \pi^{-1}(a)}\exp\big(-\frac{\norm{y-z}^2}{2\sigma^2(T-t)}\big)}.
\end{equation*}

Instead, we propose to consider the diffusion process on $[0,T)$, for some fixed positive $T$, defined by 
\begin{equation}\label{bridge diffusion on torus}
 dX_t = 1_{G^c}(X_t) \frac{\alpha(X_t) - X_t}{T-t}dt + \sigma  
 dW_t, \qquad X_0 = x_0
\end{equation}
where $\sigma > 0$ and $\alpha$ is defined by
\begin{align*}
\alpha(X_t) &= \argmin_{y \in \pi^{-1}(a)} \norm{y-X_t},
\end{align*} 
with $a\in \mathbb{T}^2$, and where $G$ is the set of "straigt lines" of the form $\mathbb{R} \times \{x\}$ (resp. $\{x\}\times \mathbb{R}$) in $\mathbb{R}^2$ where $\alpha(X_t)$ is not unique (see Figure \ref{fig:example}). The indicator function removes the drift when the process does not have a natural attraction point.

\begin{figure}[!bt]
\centering
  \def\svgwidth{.7\columnwidth}
  %% Creator: Inkscape inkscape 0.92.3, www.inkscape.org
%% PDF/EPS/PS + LaTeX output extension by Johan Engelen, 2010
%% Accompanies image file '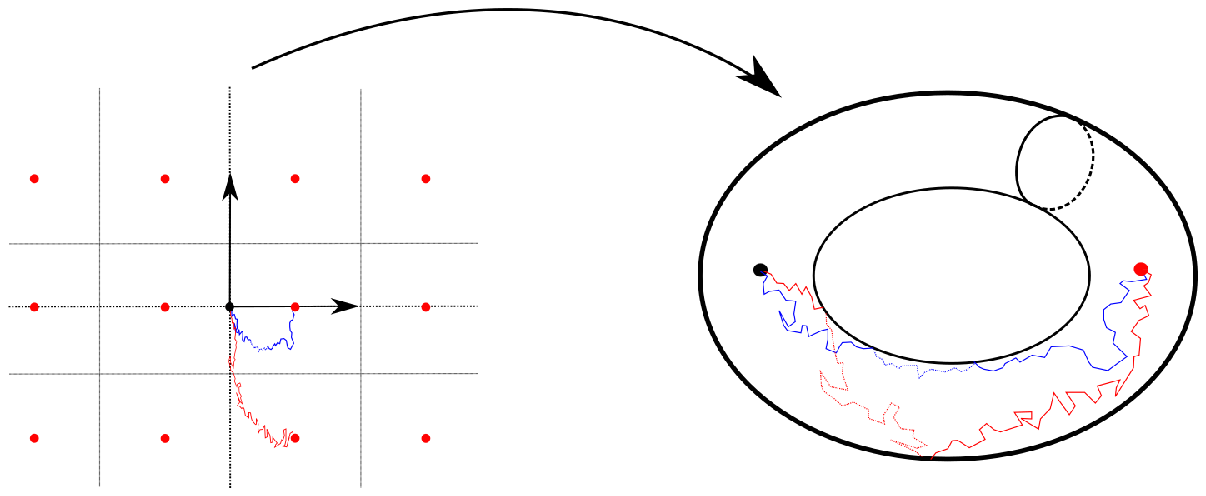' (pdf, eps, ps)
%%
%% To include the image in your LaTeX document, write
%%   \input{<filename>.pdf_tex}
%%  instead of
%%   \includegraphics{<filename>.pdf}
%% To scale the image, write
%%   \def\svgwidth{<desired width>}
%%   \input{<filename>.pdf_tex}
%%  instead of
%%   \includegraphics[width=<desired width>]{<filename>.pdf}
%%
%% Images with a different path to the parent latex file can
%% be accessed with the `import' package (which may need to be
%% installed) using
%%   \usepackage{import}
%% in the preamble, and then including the image with
%%   \import{<path to file>}{<filename>.pdf_tex}
%% Alternatively, one can specify
%%   \graphicspath{{<path to file>/}}
%% 
%% For more information, please see info/svg-inkscape on CTAN:
%%   http://tug.ctan.org/tex-archive/info/svg-inkscape
%%
\begingroup%
  \makeatletter%
  \providecommand\color[2][]{%
    \errmessage{(Inkscape) Color is used for the text in Inkscape, but the package 'color.sty' is not loaded}%
    \renewcommand\color[2][]{}%
  }%
  \providecommand\transparent[1]{%
    \errmessage{(Inkscape) Transparency is used (non-zero) for the text in Inkscape, but the package 'transparent.sty' is not loaded}%
    \renewcommand\transparent[1]{}%
  }%
  \providecommand\rotatebox[2]{#2}%
  \newcommand*\fsize{\dimexpr\f@size pt\relax}%
  \newcommand*\lineheight[1]{\fontsize{\fsize}{#1\fsize}\selectfont}%
  \ifx\svgwidth\undefined%
    \setlength{\unitlength}{348.96695835bp}%
    \ifx\svgscale\undefined%
      \relax%
    \else%
      \setlength{\unitlength}{\unitlength * \real{\svgscale}}%
    \fi%
  \else%
    \setlength{\unitlength}{\svgwidth}%
  \fi%
  \global\let\svgwidth\undefined%
  \global\let\svgscale\undefined%
  \makeatother%
  \begin{picture}(1,0.4396127)%
    \lineheight{1}%
    \setlength\tabcolsep{0pt}%
    \put(0,0){\includegraphics[width=\unitlength]{R2-torus-vs2.eps}}%
    \put(0.38587584,0.38046088){\color[rgb]{0,0,0}\makebox(0,0)[lt]{\lineheight{1.25}\smash{\begin{tabular}[t]{l}$\pi$\end{tabular}}}}%
    \put(0.10241398,0.37579491){\color[rgb]{0,0,0}\makebox(0,0)[lt]{\lineheight{1.25}\smash{\begin{tabular}[t]{l}$\mathbb{R}^2$\end{tabular}}}}%
    \put(0.23743041,0.09886392){\color[rgb]{0,0,0}\makebox(0,0)[lt]{\lineheight{1.25}\smash{\begin{tabular}[t]{l}$X_t(\omega)$\end{tabular}}}}%
    \put(0.70200127,-0.00026806){\color[rgb]{0,0,0}\makebox(0,0)[lt]{\lineheight{1.25}\smash{\begin{tabular}[t]{l}$B_t(\omega)$\end{tabular}}}}%
    \put(0.73927052,0.37924769){\color[rgb]{0,0,0}\makebox(0,0)[lt]{\lineheight{1.25}\smash{\begin{tabular}[t]{l}$\mathbb{T}^2$\end{tabular}}}}%
    \put(0.06404508,0.22470069){\color[rgb]{0,0,0}\makebox(0,0)[lt]{\lineheight{1.25}\smash{\begin{tabular}[t]{l}$G$\end{tabular}}}}%
  \end{picture}%
\endgroup%
 
  \caption{The figure illustrates the possibility of the diffusion path going an arbitrary number of times around the torus, starting at the black dot and ending in the red. This is illustrated by the red path. The conditioning on single point in $\mathbb{T}^2$ therefore leads to conditioning on multiple points in $\mathbb{R}^2$. Left: Two paths from the same two-dimensional process with multiple endpoints. Right: The projection of the two paths onto the torus.}
   \label{fig:example}
\end{figure}

\section{Existence of Strong Solution}\label{sec: existence of solution}

The drift term in equation \eqref{bridge diffusion on torus} is discontinuous. However, we below show that it posses certain regularity conditions and use this to show that a strong solution to the SDE exist.

\fancybreak{}

In order to ensure the existence of a solution to the diffusion in \eqref{bridge diffusion on torus}, we need some regularity of the drift term. The drift coefficient is given by
\begin{align}\label{drift coefficient equation}
 1_{G^c}(X_t) \frac{\alpha(X_t)-X_t}{T-t} =
 \begin{cases}
 \frac{\alpha(X_t)-X_t}{T-t}, & \text{if } X_t \in G^c \\
 0, & \text{otherwise, }
\end{cases}
\end{align}
for every $ 0 \leq t < T$, where the superscript $c$ denotes the complement.
It is a discontinuous process with the set of discontinuities being the set $G$ consisting of the set of straight lines in $\mathbb{R}^2$ where the argmin process is non-unique. It is not even clear that the drift term is suitably measurable as the argmin map in general is not.

\begin{lemma}\label{measurable lemma}
Let $b \colon [0,T) \times \mathbb{R}^2 \rightarrow \mathbb{R}^2$ be the map given by \eqref{drift coefficient equation}. Then $b$ is $\mathcal{B}([0,T)) \otimes \mathcal{B}\big( \mathbb{R}^2\big) - \mathcal{B}\big( \mathbb{R}^2\big)$ measurable. Furthermore, the map $(s,\omega) \mapsto b(s,X_s(\omega))$ is $\mathcal{B}([0,t]) \otimes \mathcal{F}^0_t$ measurable, for every $0 \leq t < T$, where $(\mathcal{F}^0_t)$ denotes the natural filtration generated by $X$. This is called progressive measurability.
\end{lemma}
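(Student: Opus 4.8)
The plan is to exploit the lattice structure of $\pi^{-1}(a)$ in order to turn the troublesome $\argmin$ map into a piecewise-constant function. Fix a lift of $a$ to $\mathbb{R}^2$, so that $\pi^{-1}(a) = a + \mathbb{Z}^2$ is a square lattice. For each $y \in \pi^{-1}(a)$ let $V_y$ denote the open Voronoi cell of $y$, i.e. the open unit square centred at $y$; since for the integer-shifted lattice the Euclidean nearest-point problem decouples coordinate-wise, $y$ is the unique nearest lattice point on the interior of this cell, and hence $\alpha(x) = y$ there. The cell boundaries are exactly the grid lines $\{x_1 = a_1 + \tfrac12 + k\}$ and $\{x_2 = a_2 + \tfrac12 + k\}$, $k \in \mathbb{Z}$, on which the nearest point fails to be unique; this is precisely the set $G$. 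First I would record that $G$, being a locally finite union of lines, is closed, so that $G^c = \bigcup_{y} V_y$ is open and the cells $V_y$ are pairwise disjoint.

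With this decomposition the drift becomes a countable sum,
\[
b(t,x) = \frac{1}{T-t}\,1_{G^c}(x)\,(\alpha(x)-x) = \frac{1}{T-t}\sum_{y \in \pi^{-1}(a)} (y-x)\,1_{V_y}(x),
\]
because on each $V_y$ we have $1_{G^c} = 1$ and $\alpha(x) = y$, while off $G^c$ the indicator vanishes. Each summand $(y-x)\,1_{V_y}(x)$ is Borel measurable in $x$ (it is continuous on the open set $V_y$ and zero on its Borel complement), so the countable sum is Borel in $x$; multiplying by the factor $\tfrac{1}{T-t}$, which is continuous, hence Borel, in $t$ on $[0,T)$, yields a function of product form $g(t)f(x)$, which is $\mathcal{B}([0,T)) \otimes \mathcal{B}(\mathbb{R}^2)$-measurable. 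This settles the first assertion.

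For the progressive measurability I would appeal to the standard composition principle: if $X$ is progressively measurable and $b$ is jointly Borel, then $(s,\omega)\mapsto b(s,X_s(\omega))$ is progressively measurable. Since the candidate process $X$ is continuous and adapted to its natural filtration $(\mathcal{F}^0_t)$, it is progressively measurable, so for each $t$ the map $(s,\omega)\mapsto (s,X_s(\omega))$ is $\mathcal{B}([0,t])\otimes\mathcal{F}^0_t$ / $\mathcal{B}([0,t])\otimes\mathcal{B}(\mathbb{R}^2)$-measurable; composing with the Borel map $b$ from the first part gives the claim.

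The main obstacle here is conceptual rather than technical: the $\argmin$ map is not measurable in general, so the whole argument hinges on identifying the geometry that rescues it in this case — namely that the nearest-point map for a lattice is constant on the open Voronoi cells, and that the exceptional set $G$ is a closed, Lebesgue-null union of lines. Once this Voronoi picture is in place, the remaining steps are routine measurability bookkeeping.
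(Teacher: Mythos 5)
Your proof is correct and rests on the same key observation as the paper's: $G^c$ decomposes into the countable disjoint union of open cells $V_y$ on which $\alpha$ is constant, so $b$ reduces to a countable combination of manifestly Borel pieces (the paper phrases this via preimages of open sets split over $G^c$ and $G$, you via a countable sum, but these are the same argument). Your explicit composition argument for progressive measurability is also fine, and in fact spells out what the paper dismisses as ``a very similar argument.''
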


\begin{proof}
First note that $G^c$ is a Borel measurable set as we can write write it as a countable union of open sets, i.e., for $y=(y_1,y_2)$ we have
\begin{equation*}
 G^c = \bigcup_{y \in \pi^{-1}(a)} \bigl(y_1-\tfrac{1}{2},y_1+ 
 \tfrac{1}{2}\bigr) \times \bigl(y_2 -\tfrac{1}{2}, y_2 +  
 \tfrac{1}{2}\bigr) =: \bigcup_{y \in \pi^{-1}(a)} V_y.
\end{equation*}
Now, we need to show that for all $A \in \mathcal{B}\big(\mathbb{R}^2\big)$, the set $b^{-1}(A)$ is an element of $\mathcal{B}([0,T)) \otimes \mathcal{B}\big( \mathbb{R}^2\big)$. 
It is enough to consider all open subsets $U \subseteq \mathbb{R}^2$ as these sets generate the Borel algebra on $\mathbb{R}^2$. So let $U$ be an arbitrary open subset, then we have that 
\begin{equation*}
b^{-1}(U) = b^{-1}(U) \cap \big([0,T) \times G^c\big) \cup b^{-1}(U) \cap \big([0,T) \times G\big).
\end{equation*}
As $b$ is continuous on each of the sets $[0,T) \times V_y$ we have that $b^{-1}(U) \cap \big([0,T) \times G^c\big)$ is a countable union of open sets and therefore an element of $\mathcal{B}([0,T)) \otimes \mathcal{B}\big( \mathbb{R}^2\big)$. For the second part we see that
\begin{equation*}
b^{-1}(U)\cap ([0,T) \times G) =
\begin{cases}
[0,T) \times G, & \text{if } (0,0) \in U  \\
\emptyset, & \text{otherwise, }
\end{cases}
\end{equation*}
where both are elements of $\mathcal{B}([0,T)) \otimes \mathcal{B}\big( \mathbb{R}^2\big)$. This shows that $b$ is Borel measurable. 

\fancybreak{}

Progressive measurability follows by a very similar argument.
\end{proof}

\noindent Usually, global or local Lipschitz conditions are imposed on the drift and diffusion coefficients in order to secure global (resp. local) strong solutions to an SDE. This is a too strong condition for the drift term in this case, however, it is bounded in the following sense.

\begin{lemma}\label{bounded drift lemma}
The drift coefficient in \eqref{drift coefficient equation} is uniformly bounded in $x$ and in $t$ on $[0,S]$, for any $0 \leq S < T$.
\end{lemma}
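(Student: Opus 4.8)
The plan is to bound the two factors of the drift separately and then combine them. Away from the exceptional set the drift equals $\frac{\alpha(x)-x}{T-t}$, while on $G$ it vanishes because of the indicator; so it suffices to control $\norm{\alpha(x)-x}$ and $\frac{1}{T-t}$ uniformly over $x\in\mathbb{R}^2$ and $t\in[0,S]$. The temporal factor is immediate: for $t\in[0,S]$ with $S<T$ we have $T-t\ge T-S>0$, and hence $\frac{1}{T-t}\le\frac{1}{T-S}$ uniformly in $t$.

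The spatial factor is the geometric core of the argument. Since $\pi^{-1}(a)$ is a translate of the integer lattice $\mathbb{Z}^2$, the point $\alpha(x)$ is a nearest lattice point to $x$, and I would argue coordinatewise: each coordinate of $x$ differs from the corresponding coordinate of its nearest lattice point by at most $\tfrac12$. This is precisely the statement that the Voronoi cell of $\mathbb{Z}^2$ is $[-\tfrac12,\tfrac12]^2$, and it yields $\norm{\alpha(x)-x}\le\sqrt{(\tfrac12)^2+(\tfrac12)^2}=\tfrac{1}{\sqrt{2}}$, with equality attained only at the cell corners, which lie in $G$ where the drift is already switched off.

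Combining the two bounds, for every $x\in\mathbb{R}^2$ and every $t\in[0,S]$ we obtain
$$\norm{1_{G^c}(x)\frac{\alpha(x)-x}{T-t}}\le\frac{1}{\sqrt{2}\,(T-S)},$$
which is the claimed uniform bound. I expect no serious obstacle here: the only step requiring care is the spatial estimate, but because the lattice is the standard integer lattice the covering-radius computation is elementary, so the difficulty lies in stating it cleanly rather than in proving anything deep.
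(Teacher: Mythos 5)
Your proof is correct and follows essentially the same route as the paper: bound the denominator below by $T-S$ and the numerator by the diameter of a Voronoi cell of the lattice $\pi^{-1}(a)$. The only difference is that you make the constant explicit ($\tfrac{1}{\sqrt{2}}$ via the covering radius of $\mathbb{Z}^2$) where the paper leaves it as an unspecified $C>0$.
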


\begin{proof}
The first assertion is clear. Let $S \in [0,T)$ be arbitrary and $ 0 \leq t \leq S$. For every $x \in G^c$ there exist a $y \in \pi^{-1}(a)$ such that we have
\begin{equation*}
 \norm*{ 1_{G^c}(x)\frac{\alpha(x)-x}{T-t}}^2 = \norm*{ 
 \frac{y-x}{T-t}}^2 
 \leq \frac{C}{(T-S)^2} = C_S,
\end{equation*}
for some positive constants $C > 0$.
\end{proof}

\noindent We now come to the main result of this section.
\begin{proposition}
There exist a strong solution of \eqref{bridge diffusion on torus} on $[0,T)$, which is strongly unique.
\end{proposition}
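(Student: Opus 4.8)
The plan is to solve the equation on each compact subinterval $[0,S]$ with $S<T$ and then patch the pieces together over $[0,T)$. Fix $S\in[0,T)$. On $[0,S]$ the dispersion coefficient is the constant matrix $\sigma I$, which is trivially Lipschitz and uniformly non-degenerate, while by \reflemma{bounded drift lemma} the drift $b$ is bounded and by \reflemma{measurable lemma} it is Borel and progressively measurable. Hence the SDE belongs to the class of equations with bounded measurable drift and uniformly elliptic, Lipschitz dispersion, for which a strong well-posedness theory is available despite the discontinuity.

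First I would record the observation that renders the discontinuity harmless. The set $G$ is a countable union of straight lines, hence of two-dimensional Lebesgue measure zero. Because the noise is non-degenerate, any solution on $[0,S]$ has a law absolutely continuous with respect to that of $x_0+\sigma W$, so its time-$t$ marginal admits a density, and by Fubini
\[
\mathbb{E}\!\left[\int_0^S 1_G(X_t)\,dt\right]=\int_0^S \mathbb{P}(X_t\in G)\,dt=0 .
\]
Thus the process spends zero time on the discontinuity set almost surely. Off $G$, on each open square $V_y$ the drift equals $\tfrac{y-x}{T-t}$, which is Lipschitz in $x$ uniformly on $[0,S]$; the only irregularity is the jump across the grid $G$.

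Next I would invoke Veretennikov's theorem on strong solutions: an $\mathbb{R}^2$-valued SDE whose drift is bounded and measurable and whose dispersion coefficient is bounded, uniformly non-degenerate and Lipschitz in space admits a pathwise unique strong solution. Applied on $[0,S]$ this produces a strong solution $X^{S}$ that is pathwise unique. Equivalently, one may first build a weak solution by Girsanov's theorem --- Novikov's condition holds since $b$ is bounded on $[0,S]$ --- and then upgrade it via pathwise uniqueness and the Yamada--Watanabe principle; in either route the substantive ingredient is the regularising effect of the non-degenerate noise.

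Finally I would patch the local solutions. For $S_1<S_2<T$ the restriction of $X^{S_2}$ to $[0,S_1]$ solves the equation on $[0,S_1]$ from the same starting point, so pathwise uniqueness forces it to coincide with $X^{S_1}$; the family $(X^{S})_{S<T}$ is therefore consistent and defines a single adapted process $X$ solving \eqref{bridge diffusion on torus} on $[0,T)$, with uniqueness inherited from each subinterval. The hard part is exactly the discontinuity of the drift along $G$, which places the problem outside classical Itô theory; the key to overcoming it is the non-degeneracy of $\sigma$, which both keeps the process off $G$ for almost every time and supplies the smoothing underlying Veretennikov's theorem.
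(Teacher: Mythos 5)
Your proposal is correct and follows essentially the same route as the paper: both rest on \reflemma{measurable lemma} and \reflemma{bounded drift lemma} to place the equation in the bounded-measurable-drift, non-degenerate-dispersion class covered by Veretennikov's strong existence and uniqueness theorems. Your explicit patching of the solutions on $[0,S]$ into one on $[0,T)$ makes precise a step the paper leaves implicit, and the observation that the process spends zero time on $G$ is a pleasant but inessential aside.
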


\begin{proof}
The drift term is Borel measurable and bounded on $[0,S]$ by \reflemma{measurable lemma} and \reflemma{bounded drift lemma}. As indicated in \cite[Thm. 2]{veretennikov1980strong} and \cite[Thm.1]{veretennikov1981strong} \eqref{bridge diffusion on torus} has a strong solution which is strongly unique. 
\end{proof}

\begin{remark}
The assumption in \cite[Thm. 2]{veretennikov1980strong} can be verified by using smooth bump functions.
\end{remark}

\section{Convergence and Absolute Continuity}\label{sec: absolute continuity}

The considerations above make the solution of \eqref{bridge diffusion on torus} into a continuous semimartingale. If a semimartingale $X$ takes its values in an open set $U$ of $\mathbb{R}^2$ then Itô's formula holds true for any $C^{1,2}([0,T) \times U)$ functions as well.

\begin{proposition}
Let $X$ be a solution to \eqref{bridge diffusion on torus} on the filtered probability space $(\Omega, \mathcal{F},(\mathcal{F}_t), P)$. For every $\omega \in \Omega$ for which there exist an $S<T$ such that $X_t(\omega)$ stays in $G^c$ on $[S,T)$, then $X$ converges pointwise almost surely to $\pi^{-1}(a)$.
\end{proposition}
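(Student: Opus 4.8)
The plan is to show that, on the event in the statement, the path becomes trapped in a single connected component of $G^c$, where \eqref{bridge diffusion on torus} reduces to a classical linear Brownian bridge SDE with an explicit solution, and then to take the limit $t \uparrow T$.

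First I would identify the connected components of $G^c = \bigcup_{y \in \pi^{-1}(a)} V_y$ as precisely the disjoint open squares $V_y$, which are the Voronoi cells of the lattice $\pi^{-1}(a)$ and are mutually separated by the grid $G$. Since $t \mapsto X_t(\omega)$ is continuous and remains in $G^c$ throughout $[S,T)$, it cannot move from one cell to another without crossing $G$; hence there is a single $y^\ast = y^\ast(\omega) \in \pi^{-1}(a)$ with $X_t(\omega) \in V_{y^\ast}$ for all $t \in [S,T)$. On $V_{y^\ast}$ we have $\alpha(X_t) \equiv y^\ast$ and $1_{G^c}(X_t) \equiv 1$, so on $[S,T)$ the equation becomes the linear SDE $dX_t = \frac{y^\ast - X_t}{T-t}\,dt + \sigma\,dW_t$.

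Next I would apply Itô's formula to $Y_t := (X_t - y^\ast)/(T-t)$ on $[S,T) \times V_{y^\ast}$, which is legitimate by the remark preceding the statement since $X$ takes values in the open set $V_{y^\ast}$. The drift cancels, leaving $dY_t = \frac{\sigma}{T-t}\,dW_t$, and therefore $X_t = y^\ast + \frac{T-t}{T-S}(X_S - y^\ast) + \sigma(T-t)\int_S^t \frac{dW_u}{T-u}$. As $t \uparrow T$ the interpolation term tends to $y^\ast$, so the claim reduces to the almost-sure vanishing of the noise term. To avoid the random time $S$, I would fix a rational $s_0 \in (S,T)$ and a lattice point $y$, work on the countably many events $\{X_t \in V_y\ \forall t \in [s_0,T)\}$ whose union is the event of the statement, and note that changing the lower limit from $S$ to $s_0$ only alters the integral by a finite random variable multiplied by $(T-t) \to 0$.

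The main obstacle is the almost-sure vanishing of $M_t := (T-t)\int_{s_0}^t \frac{dW_u}{T-u}$. Here I would time-change: the continuous local martingale $\int_{s_0}^t \frac{dW_u}{T-u}$ has quadratic variation $\tau(t) = \frac{1}{T-t} - \frac{1}{T-s_0} \to \infty$, so by the Dambis--Dubins--Schwarz theorem it equals $\beta_{\tau(t)}$ for a standard Brownian motion $\beta$. Since $(T-t)\tau(t) \to 1$, we get $M_t \sim \beta_{\tau(t)}/\tau(t)$, and the law of the iterated logarithm yields $\beta_s/s \to 0$ almost surely as $s \to \infty$; hence $M_t \to 0$ a.s. Combining the three contributions gives $X_t(\omega) \to y^\ast(\omega) \in \pi^{-1}(a)$ for almost every $\omega$ in the event, which is the assertion. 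The geometric reduction and the explicit solution are routine; the only genuine analytic work is this last LIL-type control of the stochastic integral near the singular terminal time.
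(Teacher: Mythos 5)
Your argument is correct and follows essentially the same route as the paper: by continuity the path is trapped in a single cell $V_{y^\ast}$ of $G^c$, where the equation reduces to the classical Brownian bridge SDE, whose convergence to $y^\ast$ as $t \uparrow T$ is exactly the content of the result (Delyon--Hu, Lemma~4) that the paper cites at this point. The only difference is that you write out the cited step explicitly (explicit solution, Dambis--Dubins--Schwarz time change, law of the iterated logarithm), together with the sensible technical care of covering the event by countably many events with rational starting times.
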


\begin{proof}
Assume that for some $\omega \in \Omega$ there exist some $S<T$ such that on $[S,T)$ the process $X_t(\omega)$ takes its values in $G^c$. By continuity of the process it will take it its values in some open neighborhood $V_y$ of the point $y \in \pi^{-1}$. The proof is then identical to the proof in \cite[Lemma 4]{delyon_simulation_2006}.
\end{proof}

\begin{remark}
It is of course of interest to show that for almost every path the process will converge. This can be obtained by showing that the process will not intersect $G$ infinitely many times close to $T$.
\end{remark}
 
\noindent Consider the stochastic process $\mathcal{E}$ on $ 0 \leq t \leq S$ defined by
\begin{equation}\label{Dolean dade exponential}
 \mathcal{E}(L)_t= \exp\Bigl( -\int_0^t b(s,X_s) dW_s - 
 \frac{1}{2} \int_0^t \norm{b(s,X_s)}^2ds\Bigr),
\end{equation}
where $L$ is the local martingale in the exponential. This is known as the Doléans-Dade exponential.
From \reflemma{bounded drift lemma} it follows that, for all $t \leq S$,
\begin{equation*}
 \mathbb{E}\Bigl[\exp\Bigl(\int_0^t \norm{b(s,X_s)}^2 ds
 \Bigr)\Bigr]  \leq  \exp(t C_S) < \infty
\end{equation*}
The above is known as the Novikov condition (cf. \cite{novikov_identity_1973}) which ensures that \eqref{Dolean dade exponential} is a martingale on $[0,T)$. Girsanov's theorem (\cite[Thm. 5.1 Chap. 3]{karatzas1991brownian}) then provides that the process defined by
\begin{equation*}
\widetilde{W}_t = W_t + \int_0^t b(s,X_s)ds
\end{equation*}
is a Brownian motion under the new measure $Q$ introduced below.

\begin{theorem}
Let $X$ defined on $(\Omega, \mathcal{F}, (\mathcal{F}_t),P)$ be a solution of \eqref{bridge diffusion on torus} on $[0,S]$ for $S<T$. The process in \eqref{Dolean dade exponential} defined on $0 \leq t \leq S$ ($S<T$) is a true martingale and so there exists a measure $Q$ which is absolutely continuous wrt. $P$ such that $X$ is $Q$-Brownian motion.
\end{theorem}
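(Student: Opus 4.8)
The plan is to assemble the statement from the two ingredients already in place immediately before the theorem, namely the uniform bound on the drift from \reflemma{bounded drift lemma} together with the resulting integrability estimate, and then to run the standard Novikov--Girsanov machinery. The whole argument is carried out on a fixed closed subinterval $[0,S]$ with $S<T$, staying away from the singularity of the drift at $t=T$, and the only genuine subtlety lies exactly there.

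First I would record that by \reflemma{bounded drift lemma} the drift satisfies $\norm{b(s,X_s)}^2 \le C_S$ uniformly for $s \in [0,S]$, so that
\begin{equation*}
 \mathbb{E}_P\Bigl[\exp\Bigl(\tfrac{1}{2}\int_0^S \norm{b(s,X_s)}^2\,ds\Bigr)\Bigr] \le \exp\bigl(\tfrac{1}{2}\,S\,C_S\bigr) < \infty,
\end{equation*}
which is (a fortiori, given the stronger estimate already derived above) Novikov's condition on $[0,S]$. By the classical result of \cite{novikov_identity_1973} (see also \cite[Cor. 5.13, Chap. 3]{karatzas1991brownian}), this guarantees that the local martingale $L$ appearing in \eqref{Dolean dade exponential} generates a Dol\'eans--Dade exponential $\mathcal{E}(L)$ which is a genuine, uniformly integrable martingale on $[0,S]$ rather than merely a supermartingale. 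In particular $\mathcal{E}(L)_0 = 1$, and the martingale property gives $\mathbb{E}_P[\mathcal{E}(L)_S] = 1$.

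Next I would use this to define the new measure. Since $\mathcal{E}(L)_S$ is strictly positive and has unit $P$-expectation, setting $\left.\tfrac{dQ}{dP}\right|_{\mathcal{F}_S} = \mathcal{E}(L)_S$ defines a probability measure $Q$ on $\mathcal{F}_S$ that is equivalent to, and in particular absolutely continuous with respect to, $P$. Girsanov's theorem (\cite[Thm. 5.1, Chap. 3]{karatzas1991brownian}) then applies directly, as already indicated in the discussion preceding the theorem: under $Q$ the process $\widetilde{W}_t = W_t + \int_0^t b(s,X_s)\,ds$ is a standard Brownian motion on $[0,S]$. Substituting the Girsanov relation for $dW_t$ into the defining SDE \eqref{bridge diffusion on torus}, with the compensating kernel chosen so as to cancel the absolutely continuous drift, leaves $dX_t = \sigma\,d\widetilde{W}_t$; hence $X$ is, up to its deterministic starting point and the scaling by $\sigma$, a $Q$-Brownian motion on $[0,S]$.

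I expect no serious obstacle, precisely because the hard analytic work has been front-loaded into \reflemma{bounded drift lemma}: uniform boundedness of the drift on $[0,S]$ is exactly what renders Novikov's condition immediate and thereby upgrades the local martingale to a true martingale. The one point demanding care is the restriction $S<T$. The constant $C_S$ degenerates as $S \uparrow T$ because of the $(T-t)^{-1}$ singularity in the drift, so the argument yields a true martingale and an equivalent measure only on each interval $[0,S]$ and does \emph{not} extend verbatim to the full interval $[0,T)$; this is why the statement, and the conclusion that $X$ is locally a $Q$-Brownian motion, are phrased only up to time $S$.
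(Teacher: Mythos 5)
Your proof is correct and follows essentially the same route as the paper: Novikov's condition (immediate from the uniform drift bound of Lemma~2 on $[0,S]$) upgrades the Dol\'eans--Dade exponential to a true martingale, and Girsanov's theorem then yields the measure $Q$ under which $X$ becomes a (scaled) Brownian motion. You simply spell out the steps---the definition of $Q$ via $\mathcal{E}(L)_S$ and the cancellation of the drift in the SDE---that the paper leaves implicit.
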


\begin{proof}
The martingale property of \eqref{Dolean dade exponential} on $[0,S]$ is a consequence of the Novikov condition. Then Girsanov's theorem gives us that $X$ is a $Q$-Brownian motion on $[0,S]$.
\end{proof}

From the (perhaps obvious) fact that the distribution of the true Brownian bridge is locally equivalent to the distribution of the Brownian motion up to time $t < T$, it follows that the distribution of the Brownian bridge is absolutely continuous wrt. the proposed process up to time $t < T$.

\begin{remark}
A bit of extra work is needed to obtain the correction term as in \cite{delyon_simulation_2006}. There are indications that it is possible to simulate from the true distribution of the Brownian bridge on the torus, however, Theorem 1 shows that \eqref{bridge diffusion on torus} can approximate it. 
\end{remark}

\section{Numerical Experiments.}\label{sec: numerical experiments}

For the numerical implementation of the proposed SDE in equation \eqref{bridge diffusion on torus} we implemented the Euler-Maruyama scheme, i.e. taking $n$ equidistant discretization points of the time interval $t_1,...,t_n$, with $t_{i+1}-t_i=\Delta t$, the numerical equation becomes
\begin{align*}
x_{t_{i+1}} = x_{t_i} + \frac{\argmin_{y \in \pi^{-1}(a)}(\norm{y-x_{t_i}})-x_{t_i}}{T-t_i} \Delta t + \sigma \Delta W_{t_i},
\end{align*}
where $\Delta W_{t_{i+1}} = W_{t_{i+1}}-W_{t_i}$ is equal in distribution to a normal random variable with mean zero and variance $\Delta t$.

\begin{figure}[!bt]
\centering
\begin{subfigure}[!bt]{0.4\textwidth}
  \includegraphics[scale=0.1]{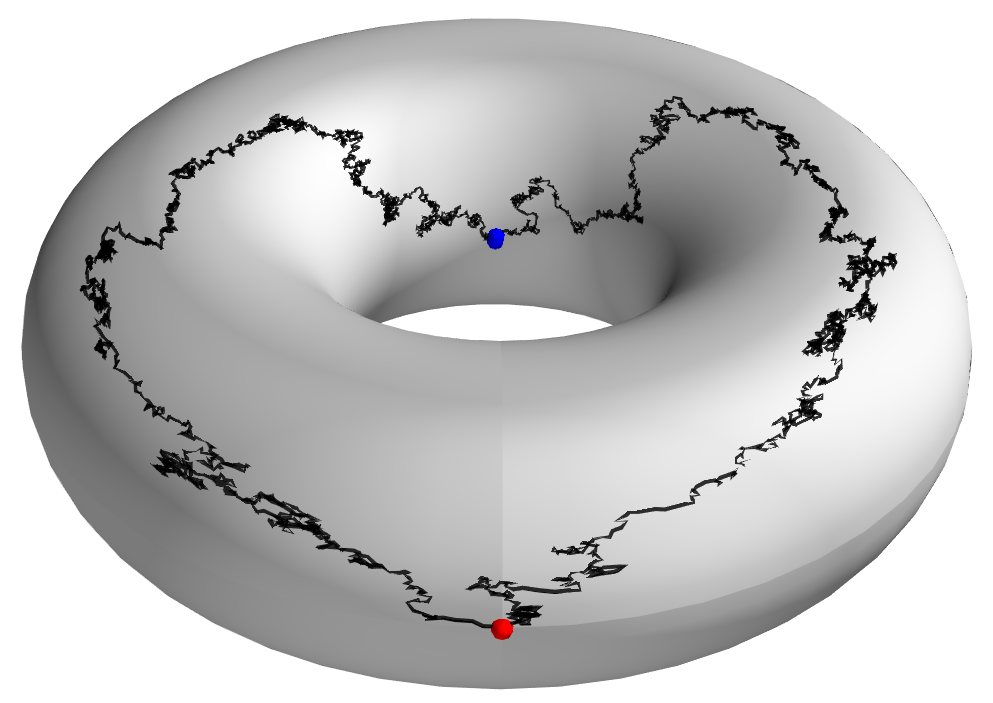} 
\caption{Paths visualized on an embedded torus.}
   \label{fig: bridge diffusion on torus}
\end{subfigure}
\qquad
\begin{subfigure}[!bt]{0.4\textwidth}
	\includegraphics[scale=0.13]{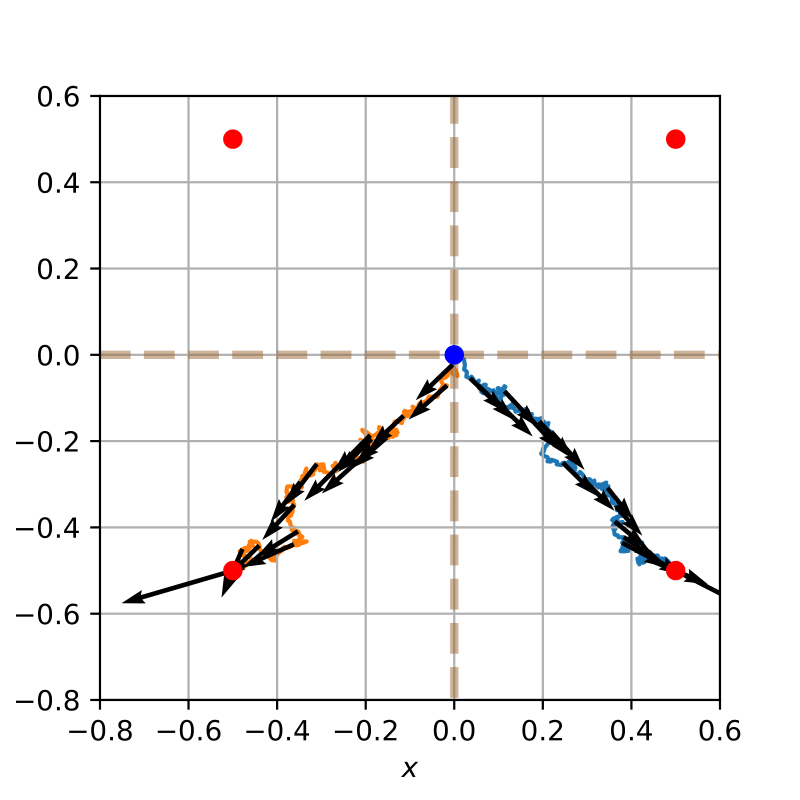}
	\caption{The two Euclidean paths that are mapped onto the torus.}
	\label{fig: two euc proc}
\end{subfigure}
\caption{Two different paths visualized both on the torus and in Eucliden space. The blue dot represents the starting point and the red represents the end point.}
\end{figure}

Figure \ref{fig: bridge diffusion on torus} shows the implementation of the numerical scheme on an embedded torus and Figure \ref{fig: two euc proc} its Euclidean counterpart. Figure \ref{fig: drift} shows the behaviour of the drift term along a given path, illustrating that the attraction becomes stronger as time approaches the terminal time. The vector fields in Figure \ref{fig: vector field} shows the constant attraction to the center of the open subsets.

\begin{figure}[!ht]
	\centering
	\begin{subfigure}[!bt]{0.4\textwidth}
		\includegraphics[scale=0.3]{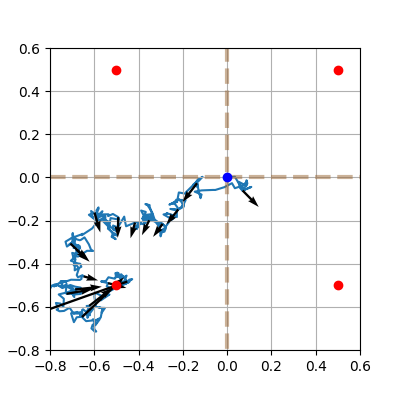}
		\caption{Drift term}
		\label{fig: drift}
	\end{subfigure}
	\qquad
	\begin{subfigure}[!bt]{0.4\textwidth}
		\includegraphics[scale=0.3]{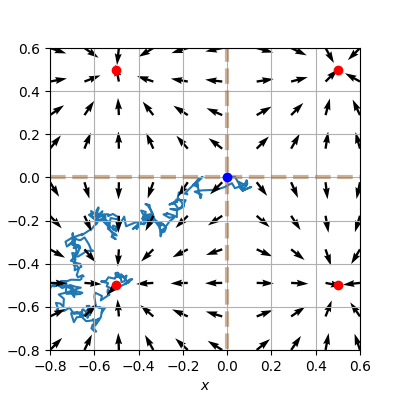}
		\caption{Vector field}
		\label{fig: vector field}
	\end{subfigure}
	\caption{Figure \ref{fig: drift} depicts the evolution of the drift term. It shows how the pull from the drift becomes stronger near the end. Figure \ref{fig: vector field} shows the underlying vector field.}
\end{figure}

\begin{figure}[!bt]
	\centering
		\includegraphics[scale=0.4]{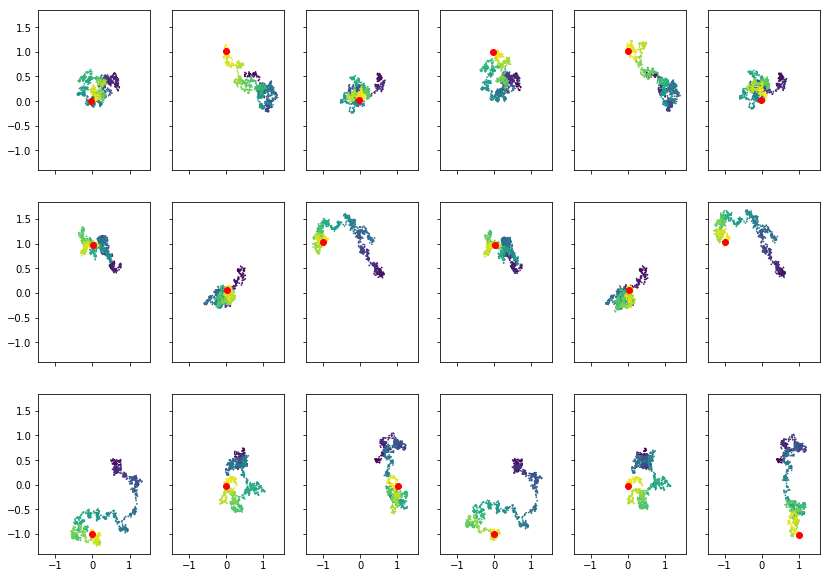}
	\caption{Figure \ref{fig: true vs proposed} shows 9 paths from the proposed model \eqref{bridge diffusion on torus} on the left and the corresponding paths from the true bridge \eqref{true bridge process} on the right. It is seen that the first and the last paths disagree on the limiting point, whereas the rest looks fairly similar. The picture agree with the fact that roughly four in five have the same limiting point. Here $\sigma = 0.8$ and the conditioning points being the integers in $[-2,2]\times [-2,2]$.}\label{fig: true vs proposed}
\end{figure}

\clearpage
\vspace{10pt}\hspace{-15pt}\textbf{Acknowledgements.} We acknowledge F. van der Meulen for discussions and insights on conditioned diffusions.

%
% ---- Bibliography ----
%
% BibTeX users should specify bibliography style 'splncs04'.
% References will then be sorted and formatted in the correct style.
%
\bibliographystyle{splncs04}
\bibliography{bibfile}

\end{document}